\documentclass[12pt,leqno]{article}
\usepackage{amsfonts}
\pagestyle{plain}
\usepackage{amsmath, amsthm, amsfonts, amssymb, color}
\usepackage{mathrsfs}
\setlength{\topmargin}{0cm} \setlength{\oddsidemargin}{0cm}
\setlength{\evensidemargin}{0cm} \setlength{\textwidth}{16.5truecm}
\setlength{\textheight}{22truecm}

\newtheorem{thm}{Theorem}[section]

\newtheorem{lem}[thm]{Lemma}

\theoremstyle{definition}

\newcommand{\scr}[1]{\mathscr #1}
\definecolor{wco}{rgb}{0.5,0.2,0.3}

\numberwithin{equation}{section} \theoremstyle{remark}

\newcommand{\ua}{\uparrow}

\title{{\bf Transportation Cost Inequalities for Neutral Functional Stochastic Equations}\footnote{Supported in
 part by  Lab. Math. Com. Sys., NNSFC(11131003), SRFDP and the Fundamental Research Funds for the Central Universities.}
}
\author{
{\bf  Jianhai Bao$^{b)}$,  Feng-Yu Wang$^{a),b)}$,  Chenggui Yuan$^{b)}$}\\
\footnotesize{$^{a)}$School of Mathematical Sciences, Beijing Normal
University, Beijing 100875, China}\\
 \footnotesize{$^{b)}$Department of Mathematics,
Swansea University, Singleton Park, SA2 8PP, UK}\\
\footnotesize{wangfy@bnu.edu.cn, F.-Y.Wang@swansea.ac.uk,
C.Yuan@swansea.ac.uk}}
\begin{document}
\def\R{\mathbb R}  \def\ff{\frac} \def\ss{\sqrt} \def\B{\mathbf
B}
\def\N{\mathbb N} \def\kk{\kappa} \def\m{{\bf m}}
\def\dd{\delta} \def\DD{\Dd} \def\vv{\varepsilon} \def\rr{\rho}
\def\<{\langle} \def\>{\rangle} \def\GG{\Gamma} \def\gg{\gamma}
  \def\nn{\nabla} \def\pp{\partial} \def\EE{\scr E}
\def\d{\text{\rm{d}}} \def\bb{\beta} \def\aa{\alpha} \def\D{\scr D}
  \def\si{\sigma} \def\ess{\text{\rm{ess}}}
\def\beg{\begin} \def\beq{\begin{equation}}  \def\F{\scr F}
\def\Ric{\text{\rm{Ric}}} \def\Hess{\text{\rm{Hess}}}
\def\e{\text{\rm{e}}} \def\ua{\underline a} \def\OO{\Omega}  \def\oo{\omega}
 \def\tt{\tilde} \def\Ric{\text{\rm{Ric}}}
\def\cut{\text{\rm{cut}}} \def\P{\mathbb P} \def\ifn{I_n(f^{\bigotimes n})}
\def\C{\scr C}      \def\aaa{\mathbf{r}}     \def\r{r}
\def\gap{\text{\rm{gap}}} \def\prr{\pi_{{\bf m},\varrho}}  \def\r{\mathbf r}
\def\Z{\mathbb Z} \def\vrr{\varrho} \def\l{\lambda}
\def\L{\scr L}\def\Tt{\tt} \def\TT{\tt}\def\II{\mathbb I}
\def\i{{\rm in}}\def\Sect{{\rm Sect}}\def\E{\mathbb E} \def\H{\mathbb H}
\def\M{\scr M}\def\Q{\mathbb Q} \def\texto{\text{o}} \def\LL{\Lambda}
\def\Rank{{\rm Rank}} \def\B{\scr B} \def\i{{\rm i}} \def\HR{\hat{\R}^d}
\def\to{\rightarrow}\def\l{\ell}\def\ll{\lambda}
\def\8{\infty}\def\ee{\epsilon}

\maketitle

\begin{abstract}

By  using   Girsanov transformation and martingale representation,
  Talagrand-type transportation cost inequalities, with respect to both the uniform and the $L^2$ distances  on the global free path space, are established for
the segment process associated to  a class of   neutral functional stochastic differential
equations. Neutral  functional stochastic partial differential
equations are also investigated.
\\

\noindent
 {\bf AMS subject Classification:}\    65G17, 65G60    \\
\noindent {\bf Keywords:} neutral functional stochastic differential
equation, transportation cost inequality,   Girsanov transformation.
 \end{abstract}

\section{Introduction}
Let $(E,\B(E))$ be a  measurable space with $\rr$ a symmetric
non-negative measurable function on $E\times E$.   For any $p\ge1$
and  probability measures $\mu$ and $\nu$ on $(E,\mathscr{B}(E))$,
the $L^p$-transportation cost (or, the $L^p$-Wasserstein distance
if $\rr$ is a distance) induced by $\rr$   between these two
measures   is defined by
\begin{equation*}
W_{p,\rr}(\mu,\nu)=\inf_{\pi\in\C(\mu,\nu)}\Big\{\int_{E\times
E}\rr^p(x,y)\pi(\d x,\d y)\Big\}^{1/p},
\end{equation*}
where $\mathscr{C}(\mu,\nu)$ denotes the space of all couplings of
$\mu$ and $\nu$. In many practical situations, one wants to find
reasonable and simple  upper bounds for
 $W_{p,\rr}(\mu,\nu)$, where a fully satisfactory one
is given by the transportation cost inequality first found by  Talagrand  \cite{T96} for the standard Gaussian measure   $\mu$ on $\R^d$:
\begin{equation*}
W_{2,\rr}(\mu,f\mu)^2\le2 \mu(f\log f),\ \ f\ge 0, \mu(f)=1
\end{equation*}
with $\rr(x,y):=|x-y|.$  Since then, this type transportation
cost inequality has been intensively investigated and applied for
various different distributions. The importance of the study lies on
intrinsic links of the transportation cost inequality to several
crucial subjects, such as functional inequalities, concentration
phenomena, optimal transport problem, and large deviations, see e.g.
\cite{BG99, BGL, GRS11, FS07, L, OV, U, V03, W04b} and references
within.

In the past decade, a plenty of results have been published
concerning Talagrand-type transportation cost inequalities on the
path spaces of stochastic processes, see e.g. \cite{DGW04, WZ04,
WZ06} for diffusion processes on $\R^d$,  \cite{P11} for
multidimensional semi-martingales, \cite{U10}  for   diffusion
processes with history-dependent drift, \cite{W04, W04b} for
diffusion processes on Riemannian manifolds,  \cite{W10}   for SDEs
driven by
  pure jump processes, and   \cite{M10}
for SDEs driven by both Gaussian and jump noises. Recently,
transportation cost inequalities for the reflecting diffusion
processes on manifolds with boundary have been used in \cite{W11} to
characterize the curvature of the generator and the convexity of the
boundary.

Moreover, many different arguments have been developed to establish
the transportation cost inequality.  Among others,  the Girsanov
transformation  argument introduced  in \cite{DGW04} has been
efficiently applied, see  e.g. \cite{WZ06} for infinite-dimensional
dynamical systems, \cite{P11} for time-inhomogeneous diffusions,
\cite{U10} for
  multi-valued SDEs and singular SDEs, and \cite{S11} for
SDEs driven by a fractional Brownian motion. Following this line,
 in this paper we aim to establish transportation cost inequalities  for the
 segment processes associated to a class of neutral functional SDEs, which is unknown so far.
 The   point of our study is not the construction of the coupling as it is now more or less standard in the literature,
  but lies on the technical details to derive from the coupling reasonable estimates for which difficulties caused by the neutral part
  and functional coefficients have to be carefully managed.

Recall that a differential equation is  called neutral if, besides
the derivatives of the present state of the system, those of the
past history are also involved  (see \cite{M97}).  Let
$\C:=C([-\tau,0];\R^d)$ for some constant $\tau>0$, which is a
Banach space with the uniform norm $\|\cdot\|_\8 $. Let $\C$ be
equipped with the Borel $\si$-field induced by $\|\cdot\|_\8$. For
any $h\in C([-\tau,\8);\R^d)$ and $t\ge 0$, let $h_t\in\C$ such that
$h_t(\theta)= h(t+\theta), \theta\in [-\tau,0].$ We consider the
following neutral functional SDE on $\R^d$:
\begin{equation}\label{eq1}
\begin{cases}
\d\{X(t)-G(X_t)\}=b(X_t)\d t+\si(X_t)\d W(t), \ \ \ t\in[0,T],\\
X_0=\xi\in\mathscr{C},
\end{cases}
\end{equation}
where  $G,b:\C\to\R^d$ and $\si:\C\to\R^d\otimes\R^m$ are Lipschitz
continuous on bounded sets, and $W(\cdot)$ is an $\R^m$-valued
Brownian motion defined on a complete probability space
$(\OO,\F,\P)$ with the natural filtration $\{\F_{t}\}_{t\ge0}$.
Throughout this paper, we assume that for any initial data $X_0$,
  a $\C$-valued random variable independent of $W(\cdot),$ this equation has a unique global solution.
  This   can be ensured by   the strict contraction of $G$, i.e. $|G(\xi)-G(\eta)|\le \kk \|\xi-\eta\|_\8$ holds
  for some constant $\kk\in [0,1)$ and all $\xi,\eta\in\C$, together with the usual monotonicity and coercivity conditions
   of $b$ and $\si$, see e.g. \cite[Theorem 2.3]{Vs10}. We note that  the segment process $(X_t)_{t\ge 0}$ of the solution is a Markov process.

 As in \cite{W11}, we allow the initial data of the equation to be random, i.e. we consider the transportation cost inequality
 for the law of the solution starting from a probability measure $\mu$ on $\C$.
 In Section 2 we study the transportation cost inequality with respect to the  uniform distance on
  path space, while in Section 3 we consider the $L^2$ distance.
 Finally, in Section 4, we extend our results to a class of  neutral functional SPDEs.

\section{The uniform distance}
Let $T>0$ be fixed. For any $\xi\in \C$, let $\Pi^T_\xi$ be the
distribution of $X_{[0,T]}:=(X_t)_{t\in [0,T]}$ for the solution to
(\ref{eq1}) with $X_0=\xi$. Then,  for any $\mu\in\scr P(\C),$ the
set of all probability measures on $\C$, the distribution of
$X_{[0,T]}$ with initial distribution $\mu$ is given by
$$\Pi_\mu^T= \int_\C \Pi_\xi^T\mu(\d\xi).$$

For any probability density function $F$ of $\Pi_\mu^T$, i.e. $F$ is a non-negative measurable function on the free path space $C([0,T];\C)$ such that
 $\Pi_\mu^T(F):= \int_\C F\d\Pi_\mu^T=1,$ let $\mu_F^T$ be the marginal distribution of $F\Pi_\mu^T$ at time $0$. We have
$$\mu_F^T(\d\xi)= \Pi_\xi^T(F)\mu(\d\xi)\in \scr P(\C).$$ Let $\|\cdot\|$ and  $\|\cdot\|_{HS}$ denote the operator norm and  the Hilbert-Schmidt norm respectively.

To establish the transportation cost inequality for $\Pi_\mu^T$ with respect to the   uniform distance
\begin{equation}\label{eq011}
\rr_\8^T (\bar \xi,\bar \eta):=\sup_{t\in [0,T]}   \|\bar \xi_t-\bar
\eta_t\|_\8,\ \ \bar\xi,\bar\eta\in C([0,T];\C),
\end{equation}
we shall need the following conditions.

\beg{enumerate} \item[{\bf (A1)}]   There exists a constant $\kk\in [0,1)$ such that
   $$
|G(\xi)-G(\eta)|\le\kk\|\xi-\eta\|_\8,\ \ \xi,\eta\in\C.
$$
\item[{\bf (A2)}] There exist constants $\ll_1\in\R$ and $ \ll_2\ge 0$ such that
\begin{equation*}
\begin{split}
& 2\big\< \xi(0)-\eta(0) - G(\xi)+G(\eta),
b(\xi)-b(\eta)\big\>+\|\si(\xi)-\si(\eta)\|_{HS}^2
 \le \ll_1\|\xi-\eta\|^2_\8,\\
&\|\si(\xi)-\si(\eta)\|_{HS}^2\le\ll_2\|\xi-\eta\|_\8^2, \ \
\xi,\eta\in\C.
\end{split}
\end{equation*}\item[{\bf (A3)}] There exists a constant  $\ll_3> 0$ such that $\|\si(\xi)\|\le\ll_3$ for all $\xi\in\C.$
\end{enumerate}

\

Let $\ll_1^+=0\lor\ll_1$ and $\ll_1^-= 0\lor (-\ll_1).$ We will need
the following two quantities:
\begin{equation}\label{eq014}
\alpha(T):=\ff{2\ll_3(1+\kk)^2}{(1-\kk)^2}\min\bigg\{
\ff{(4\ss{\ll_2}+\ss{16\ll_2+\ll_1^+})^2}{(\ll_1^+)^2},\
\ff{4T\exp\big[1+\ff{2\ll_1^-+4\ll_2}{(1-\kk)^2}T\big]}{2T\ll_1^++(1-\kk)^2}\bigg\}
\end{equation}
\begin{equation}\label{eq015}
\bb(T):=1+\ff{(1+\kk)^2}{(1-\kk)^2}
\min\bigg\{\ff{(2\ss{\ll_2}+\ss{4\ll_2+\ll_1^+})^2}{\ll_1^+},\
2\exp\Big[\ff{2\ll_1^-+16\ll_2}{(1-\kk)^2}T\Big]\bigg\}.
\end{equation}

The main result of this section is the following.

\begin{thm}\label{Theorem 1} Assume {\bf (A1)}-{\bf (A3)} and let
\begin{equation}\label{eq013}
\rr(\xi,\eta):=\|\xi-\eta\|_\8, \ \ \ \xi,\eta\in\C.
\end{equation}
For any $T>0,\mu\in\scr P(\C)$ and non-negative measurable function
$F$ on $C([0,T];\C)$ such that $\Pi_\mu^T(F)=1,$
 \begin{equation}\label{eq16}
W_{2,\rr_\8^T}(F\Pi^T_\mu,\Pi^T_\mu)\le\ss{\bb(T)}W_{2,\rr}(\mu,\mu^T_F)+\ss{\aa(T)}\ss{\Pi_\mu^T(F\log
F)}.
\end{equation} If moreover $\mu$ satisfies the
transportation cost inequality
\begin{equation}\label{eq29}
W_{2,\rr}(\mu,f\mu)^2\le c_\mu \mu(f\log f),\ \ f\ge 0,\mu(f)=1
\end{equation}
for some constant $c_\mu>0$, then
\begin{equation}\label{eq30}
W_{2,\rr_\8^T}(F\Pi^T_\mu,\Pi^T_\mu)^2\le\Big(\ss{\aa(T)}+\ss{c_\mu
\bb(T)}\Big)^2\Pi_\mu^T(F\log F).
\end{equation}
\end{thm}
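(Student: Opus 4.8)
The plan is to run the Girsanov transformation argument of \cite{DGW04}, realizing $\Pi_\mu^T$ and $F\Pi_\mu^T$ as the laws of two coupled solutions on one probability space, reading off the coupling cost from an It\^o--Gronwall estimate, and identifying the resulting $\int_0^T|\beta_s|^2\,\d s$ term with the relative entropy. Fix $\mu$ and $F$, and let $(X(t))_{t\ge-\tau}$ solve \eqref{eq1} with $X_0\sim\mu$ driven by $W$ on $(\OO,\F,\P)$, so that $X_{[0,T]}$ has law $\Pi_\mu^T$. I would introduce the probability measure $\Q:=F(X_{[0,T]})\,\d\P$, whose density martingale $M_t:=\E[F(X_{[0,T]})\mid\F_t]$ is strictly positive (after the routine reduction to $F$ bounded away from $0$ and $\8$); by the martingale representation theorem there is a predictable $\R^m$-valued process $\beta$ with $\d M_t=M_t\<\beta_t,\d W_t\>$, i.e. $F(X_{[0,T]})$ is the Dol\'eans--Dade exponential of $\int_0^\cdot\<\beta_s,\d W_s\>$. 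By Girsanov's theorem $\tt W_t:=W_t-\int_0^t\beta_s\,\d s$ is an $\R^m$-valued Brownian motion under $\Q$, and the law of $X_{[0,T]}$ under $\Q$ is exactly $F\Pi_\mu^T$, with time-$0$ marginal $\mu_F^T$.

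To produce a competitor, I would let $\pi$ be an optimal coupling of $\mu_F^T$ and $\mu$ for $W_{2,\rr}$, build (on a possibly enlarged space) a $\C$-valued $Y_0$ with $(X_0,Y_0)\sim\pi$ under $\Q$ and independent of $\tt W$, and let $Y$ solve \eqref{eq1} driven by $\tt W$ from $Y_0$; then $Y_{[0,T]}$ has $\Q$-law $\Pi_\mu^T$, so $(X_{[0,T]},Y_{[0,T]})$ is a $\Q$-coupling of $F\Pi_\mu^T$ and $\Pi_\mu^T$ and hence $W_{2,\rr_\8^T}(F\Pi_\mu^T,\Pi_\mu^T)^2\le\E_\Q\big[\sup_{t\le T}\|Z_t\|_\8^2\big]$, where $Z(t):=X(t)-Y(t)$. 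Setting $U(t):=\{X(t)-G(X_t)\}-\{Y(t)-G(Y_t)\}$, under $\Q$ one has
\[
\d U(t)=\{b(X_t)-b(Y_t)+\si(X_t)\beta_t\}\,\d t+\{\si(X_t)-\si(Y_t)\}\,\d\tt W_t .
\]
Applying It\^o's formula to $|U(t)|^2$, the drift plus the It\^o correction is bounded by $\ll_1\|Z_t\|_\8^2$ through \textbf{(A2)} (note $U(t)$ is exactly the vector $\xi(0)-\eta(0)-G(\xi)+G(\eta)$ appearing there with $\xi=X_t,\eta=Y_t$), while \textbf{(A3)} bounds the cross term by $2\ll_3|U(t)|\,|\beta_t|$.

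The main obstacle is passing from the bound on $|U|$ to one on the running supremum of $\|Z_t\|_\8$, since the neutral term and the delay couple the present increment to the whole past segment. For this I would use the pointwise estimate $|Z(t)|\le|U(t)|+\kk\|Z_t\|_\8$ from \textbf{(A1)}; taking running suprema over $[0,t]$ gives $(1-\kk)\sup_{s\le t}|Z(s)|\le\sup_{s\le t}|U(s)|+\kk\|Z_0\|_\8$, which together with $|U(0)|\le(1+\kk)\|Z_0\|_\8$ is the source of the factor $(1+\kk)^2/(1-\kk)^2$. To connect with the entropy, observe that under $\Q$ one has $\int_0^T\<\beta_s,\d W_s\>=\int_0^T\<\beta_s,\d\tt W_s\>+\int_0^T|\beta_s|^2\,\d s$, so $\log F=\int_0^T\<\beta_s,\d\tt W_s\>+\ff12\int_0^T|\beta_s|^2\,\d s$ and
\[
\Pi_\mu^T(F\log F)=\E_\P[F\log F]=\E_\Q[\log F]=\tfrac12\,\E_\Q\int_0^T|\beta_s|^2\,\d s .
\]
Feeding the Burkholder--Davis--Gundy and Gronwall inequalities into the $|U|^2$-estimate, taking $\E_\Q\sup_{t\le T}$, and inserting this identity together with $\E_\Q\|Z_0\|_\8^2=W_{2,\rr}(\mu,\mu_F^T)^2$ would yield $\E_\Q\big[\sup_{t\le T}\|Z_t\|_\8^2\big]\le\bb(T)\,W_{2,\rr}(\mu,\mu_F^T)^2+\aa(T)\,\Pi_\mu^T(F\log F)$. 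The Gronwall step admits two estimates --- one exploiting $\ll_1^+>0$ directly, the other a crude exponential bound valid for every $T$ --- and retaining the smaller produces the $\min\{\cdot,\cdot\}$ in \eqref{eq014}--\eqref{eq015}; finally $\ss{a+b}\le\ss a+\ss b$ gives \eqref{eq16}.

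For \eqref{eq30} I would apply the assumed inequality \eqref{eq29} to $f:=\d\mu_F^T/\d\mu=\Pi_\cdot^T(F)$, giving $W_{2,\rr}(\mu,\mu_F^T)^2\le c_\mu\,\mu(f\log f)$. With $F_\xi:=F/\Pi_\xi^T(F)$, the entropy decomposition
\[
\Pi_\mu^T(F\log F)=\mu(f\log f)+\int_\C\Pi_\xi^T(F_\xi\log F_\xi)\,\mu_F^T(\d\xi)
\]
and the nonnegativity of the conditional relative entropy give $\mu(f\log f)\le\Pi_\mu^T(F\log F)$, whence $W_{2,\rr}(\mu,\mu_F^T)\le\ss{c_\mu}\,\ss{\Pi_\mu^T(F\log F)}$. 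Substituting into \eqref{eq16} and squaring yields \eqref{eq30}.
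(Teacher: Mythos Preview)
Your overall scheme --- Girsanov transformation, martingale representation, the entropy identity $\Pi_\mu^T(F\log F)=\tfrac12\E_\Q\int_0^T|\beta_s|^2\,\d s$, It\^o on $|U|^2$, then BDG/Gronwall --- is exactly the machinery the paper uses. The derivation of \eqref{eq30} from \eqref{eq16} via $\mu(f\log f)\le\Pi_\mu^T(F\log F)$ is also the paper's argument.

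Where you diverge is in the \emph{decomposition}. The paper does not run a single coupling with both $X_0\ne Y_0$ and a nontrivial Girsanov drift. Instead it inserts the intermediate measure $\Pi_{\mu_F^T}^T$ and uses the triangle inequality
\[
W_{2,\rr_\8^T}(F\Pi_\mu^T,\Pi_\mu^T)\le W_{2,\rr_\8^T}(F\Pi_\mu^T,\Pi_{\mu_F^T}^T)+W_{2,\rr_\8^T}(\Pi_{\mu_F^T}^T,\Pi_\mu^T),
\]
bounding the two pieces by separate lemmas: one (Lemma~\ref{Lemma 1}) with $X_0=Y_0$ but $h\ne 0$, yielding the $\aa(T)$ term; the other (Lemma~\ref{Lemma 2}) with $h=0$ but $X_0\ne Y_0$, yielding the $\bb(T)$ term. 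This splitting is what allows \emph{independent} optimisation of the auxiliary parameters $\dd,\vv$ in the two regimes. In particular, the constant $\bb(T)$ is obtained by setting $\dd=0$ in the step \eqref{eq61}--\eqref{OPQ}, which is admissible only because $h\equiv 0$ there; in your combined coupling $h\not\equiv 0$, so you must keep $\dd>0$, and the Gronwall exponential then picks up an extra $\dd$-dependent contribution on the initial-data term. Thus your single-step estimate would produce a bound of the form $C_1(T)W_{2,\rr}(\mu,\mu_F^T)^2+C_2(T)\Pi_\mu^T(F\log F)$ with $C_1(T)\ge\bb(T)$ in general, not the claimed equality. The triangle-inequality route buys precisely this decoupling and hence the stated constants; your route is more direct but, without further work, gives a weaker (larger) constant in front of the Wasserstein term.
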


\begin{proof} The proof is based on the following   Lemma \ref{Lemma 1} and Lemma
\ref{Lemma 2}.
By the triangle inequality it follows that
\begin{equation*}
W_{2,\rr_\8^T}(F\Pi^T_\mu,\Pi^T_\mu)\le
W_{2,\rr_\8^T}(F\Pi^T_\mu,\Pi^T_{\mu^T_F})+W_{2,\rr_\8^T}(\Pi^T_\mu,\Pi^T_{\mu^T_F}).
\end{equation*}
Then \eqref{eq16} follows from Lemma \ref{Lemma 1} and Lemma
\ref{Lemma 2}, and   \eqref{eq30} is a direct consequence of  \eqref{eq16} and
\eqref{eq29}.
\end{proof}

Let $\mu=\dd_\xi$ for $\xi\in\C$. Then (\ref{eq29}) holds for $c_\mu=0$, so that (\ref{eq30}) becomes
$$W_{2,\rr_\8^T}(F\Pi^T_\xi,\Pi^T_\xi)^2\le \aa(T) \Pi_\xi^T(F\log
F).$$ This inequality also follows from the following lemma since in this case we have $\mu=\mu_F^T=\dd_\xi$.

\begin{lem}\label{Lemma 1} Assume {\bf (A1)}-{\bf (A3)}.
  For any $\mu\in\scr P(\C)$ and $T>0$,
\begin{equation}\label{eq26}
W_{2,\rr^T_\8}(F\Pi^T_\mu,\Pi^T_{\mu^T_F})^2\le\aa(T)\Pi_\mu^T(F\log
F),\ \ F\ge 0, \ \Pi_\mu^T(F)=1.
\end{equation}
\end{lem}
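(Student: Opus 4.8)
The plan is to realise both measures on one probability space via the Girsanov-transformation coupling of \cite{DGW04}. We may assume $\Pi_\mu^T(F\log F)<\8$. Solve \eqref{eq1} on $(\OO,\F,\P)$ with $X_0\sim\mu$ independent of $W$, so that $X_{[0,T]}$ has law $\Pi_\mu^T$ and $R:=F(X_{[0,T]})$ is a probability density on $(\OO,\F_T,\P)$. Writing $R_t=\E[R\,|\,\F_t]$ and applying the martingale representation theorem, there is a predictable $\R^m$-valued process $\gg$ with $R_t=R_0\exp\big(\int_0^t\<\gg(s),\d W(s)\>-\ff12\int_0^t|\gg(s)|^2\d s\big)$; here $R_0=\E[R\,|\,\F_0]=\ff{\d\mu_F^T}{\d\mu}(X_0)$ because of the random initial data. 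Set $\d\Q=R_T\,\d\P$. By Girsanov's theorem $\tt W(t):=W(t)-\int_0^t\gg(s)\,\d s$ is a Brownian motion under $\Q$, the $\Q$-law of $X_{[0,T]}$ is $F\Pi_\mu^T$, and the $\Q$-law of $X_0$ is $\mu_F^T$. Let $Y$ solve \eqref{eq1} driven by $\tt W$ with $Y_0=X_0$; then the $\Q$-law of $Y_{[0,T]}$ is $\Pi_{\mu_F^T}^T$, so $(X_{[0,T]},Y_{[0,T]})$ is, under $\Q$, a coupling of $F\Pi_\mu^T$ and $\Pi_{\mu_F^T}^T$ and hence
\[
W_{2,\rr_\8^T}(F\Pi_\mu^T,\Pi_{\mu_F^T}^T)^2\le\E_{\Q}\sup_{t\in[0,T]}\|X_t-Y_t\|_\8^2 .
\]
The entropy enters through $\gg$: expressing $\log R_T$ under $\Q$ and taking $\E_{\Q}$ gives $\ff12\E_{\Q}\int_0^T|\gg(s)|^2\d s=\Pi_\mu^T(F\log F)-\mathrm{Ent}(\mu_F^T\,|\,\mu)\le\Pi_\mu^T(F\log F)$, the nonnegative initial relative entropy being discarded.

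It therefore suffices to show $\E_{\Q}\sup_t\|X_t-Y_t\|_\8^2\le\ff12\aa(T)\,\E_{\Q}\int_0^T|\gg|^2\d s$, which together with the entropy inequality yields \eqref{eq26}. Put $Z(t)=X(t)-Y(t)$ and $\Phi(t)=\{X(t)-G(X_t)\}-\{Y(t)-G(Y_t)\}$; since $Y_0=X_0$ we have $Z\equiv0$ on $[-\tau,0]$, and under $\Q$
\[
\d\Phi(t)=\big(b(X_t)-b(Y_t)+\si(X_t)\gg(t)\big)\d t+\big(\si(X_t)-\si(Y_t)\big)\d\tt W(t).
\]
By It\^o's formula, {\bf (A2)} (which is tailored to the combination of $\<\Phi(t),b(X_t)-b(Y_t)\>$ with $\|\si(X_t)-\si(Y_t)\|_{HS}^2$) and $\|\si(X_t)\|\le\ll_3$ from {\bf (A3)},
\[
\d|\Phi(t)|^2\le\big(\ll_1\|Z_t\|_\8^2+2\ll_3|\Phi(t)|\,|\gg(t)|\big)\d t+2\<\Phi(t),(\si(X_t)-\si(Y_t))\d\tt W(t)\> .
\]
The neutral part is absorbed using {\bf (A1)}: from $|Z(s)|\le|\Phi(s)|+\kk\|Z_s\|_\8$ and $Z\equiv0$ on $[-\tau,0]$ one checks $\sup_{0\le s\le t}\|Z_s\|_\8=\sup_{0\le s\le t}|Z(s)|\le\ff1{1-\kk}\Phi^*(t)$, where $\Phi^*(t):=\sup_{0\le s\le t}|\Phi(s)|$, while conversely $|\Phi(s)|\le(1+\kk)\sup_{0\le r\le s}\|Z_r\|_\8$. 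These two conversions are the source of the factor $(1+\kk)^2/(1-\kk)^2$ in $\aa(T)$.

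The main obstacle is that the uniform distance $\rr_\8^T$ forces control of the \emph{supremum} in time, not merely of $\E_{\Q}\|Z_t\|_\8^2$ at a fixed $t$. Taking $\sup_{s\le t}$ and then $\E_{\Q}$ in the It\^o inequality, I would estimate the martingale term by Burkholder--Davis--Gundy, bounding its quadratic variation through the second inequality in {\bf (A2)} ($\|\si(X_s)-\si(Y_s)\|_{HS}^2\le\ll_2\|Z_s\|_\8^2$); split the Girsanov cross-term $2\ll_3|\Phi|\,|\gg|$ by Young's inequality; and choose the Young constants so that a controlled multiple of $\E_{\Q}\Phi^*(t)^2$ is absorbed into the left-hand side while the remaining $\gg$-contribution is $\E_{\Q}\int_0^T|\gg|^2\d s\le2\Pi_\mu^T(F\log F)$. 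This yields a Gronwall-type inequality
\[
\E_{\Q}\Phi^*(t)^2\le C_1\int_0^t\E_{\Q}\Phi^*(s)^2\,\d s+C_2\,\E_{\Q}\int_0^T|\gg|^2\d s ,
\]
with $C_1,C_2$ explicit in $\ll_1,\ll_2,\ll_3,\kk,T$. I expect the delicate point to be the bookkeeping of these constants so that the $\gg$-term collapses with the \emph{optimal} coefficient. Closing the inequality in two ways — a direct Gronwall step (giving the exponential-in-$T$ factor, with the sign of $\ll_1$ recorded through $\ll_1^-$) and, when $\ll_1^+>0$, an optimisation over an exponential time-weight and the Young constants (equivalently, solving a quadratic, which removes the $T$-dependence) — produces the two expressions inside the $\min$, so that $\aa(T)$ is their minimum.
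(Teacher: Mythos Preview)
Your proposal is correct and follows essentially the same Girsanov--coupling route as the paper: It\^o on $|\Phi|^2$, BDG for the supremum, Young's inequality on the cross term, and closing either by Gronwall or, when $\ll_1>0$, by optimising the free constants. Two small points of comparison. First, the paper immediately reduces to $\mu=\dd_\xi$ (so $R_0\equiv1$ and the entropy identity is an equality), whereas you keep the random initial data and discard the nonnegative term $\mathrm{Ent}(\mu_F^T|\mu)$; both are fine. Second, for the $\ll_1>0$ branch the paper does \emph{not} use an exponential time-weight: it first integrates the It\^o inequality without taking the supremum (choosing $\dd=\ll_1/2$) to get a $T$-free bound on $\E_\Q\int_0^T\|X_s-Y_s\|_\8^2\,\d s$, and then feeds this bound back into the already-derived sup-estimate, optimising the remaining $\vv$; this two-step is what produces the expression $(4\ss{\ll_2}+\ss{16\ll_2+\ll_1^+})^2/(\ll_1^+)^2$.
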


\begin{proof}
The main idea of the  proof is taken from    \cite[Proof of Theorem
1.1]{W11}, which indeed goes back to \cite{DGW04}. According to (b)
in the proof of \cite[Theorem 1.1]{W11}, we may  and do assume that
$\mu=\dd_\xi, \xi\in\C$. In this case
$\Pi_\mu^T=\Pi_{\mu^T_F}^T=\Pi^T_\xi$. For a positive bounded
measurable function $F$ on $C([0,T];\C)$ such that $ \Pi^T_\xi(F)=1$
and $\inf F>0$, define
\begin{equation*}
m(t):=\E(F(X_{[0,T]})|\F_t) \ \ \mbox{ and }\ \ L(t):=\int_0^t\ff{\d
m(s)}{m(s)},\ \ \ t\in[0,T],
\end{equation*}
where $\E$ is the expectation taken for the probability measure
$\P$. Then $m(t)$ and $L(t)$ are square-integrable
$\F_t$-martingales under $\P$ due to  $\inf F>0$ and the boundedness
of $F$. Note by the It\^o formula that
\begin{equation}\label{eq2}
m(t)=e^{L(t)-\ff{1}{2}\<L\>(t)},
\end{equation}
where $\<L\>(t)$ denotes the quadratic variation process of $L(t)$,
and, by the martingale representation theorem, e.g., \cite[Theorem
6.6]{IW89}, there exists a unique $\R^m$-valued
$\F_t$-predictable process $h(t)$ such that
\begin{equation}\label{eq3}
L(t)=\int_0^t\< h(s),\d W(s)\>.
\end{equation}
 Since $F(X_{[0,T]})$ is $\F_T$-measurable and $\< L\>(t)=\int_0^t|h(s)|^2\d s$, it
then follows from \eqref{eq2} and \eqref{eq3} that
\begin{equation*}
F(X_{[0,T]})=m(T)=\exp\bigg[\int_0^T\< h(s),\d
W(s)\>-\int_0^T|h(s)|^2\d s\bigg].
\end{equation*}
Let
\begin{equation}\label{eq05}
\d\Q=F(X_{[0,T]})\d\P.
\end{equation}
Then $\Q$ is a probability measure on $\OO$ due to $\Pi_\xi^T(F)=1$.
To prove the desired inequality, we need to characterize
$\Pi_\xi^T(F\log F)$ and $W_{2,\rr_\8^T}(F\Pi_\xi^T,\Pi_\xi^T)$
respectively.

\

(i) Recalling that $F(X_{[0,T]})=m(T),$   $m(t)$ is a
square-integrable $\F_t$-martingale under $\P$, and observing that
$h(s)$ is $\F_s$-measurable, we have
\begin{equation}\label{eq10}
\begin{split}
\E_\Q|h(s)|^2=\E(m(T)|h(s)|^2)=\E(|h(s)|^2\E(m(T)|\F_s))=\E(|h(s)|^2m(s)).
\end{split}
\end{equation}
Moreover, by the It\^o formula
\begin{equation}\label{eq8}
\begin{split}
\d(m(s)\log m(s))&=(1+\log m(s))\d m(s)+\ff{\d\<
m\>(t)}{2m(t)}\\
&=(1+\log m(s))\d m(s)+\ff{m(s)}{2}|h(s)|^2\d t,
\end{split}
\end{equation}
where we have used the fact that
\begin{equation*}
\d \< m\>(s)=m^2(s)\d \< L\>(s)=m^2(s)|h(s)|^2\d s.
\end{equation*}
  Since $m(t)$ is a
square-integrable $\F_t$-martingale under $\P$, integrating from $0$
to $T$ and taking expectations with respect to $\P$ on both sides of
\eqref{eq8}, we get
\begin{equation}\label{eq11}\Pi_\xi^T(F\log F)=
\E(m(T)\log m(T))=\ff{1}{2}\int_0^T\E(m(t)|h(t)|^2)\d t=\ff 1 2
\int_0^T \E_\Q|h(t)|^2\d t.
\end{equation}

(ii) Recalling that $m(t)$ is a  square-integrable $\F_t$-martingale
under $\P$, we deduce from the Girsanov theorem that
\begin{equation}\label{eq06}
\tt{W}(t):=W(t)-\int_0^th(s)\d s
\end{equation}
is an $m$-dimensional $\F_t$-Brownian motion on the probability
space $(\Omega,\F,\Q)$.
 Reformulate (\ref{eq1}) as
\begin{equation}\label{eq34}
\begin{cases}
\d\{X(t)-G(X_t)\}=\{b(X_t)+\si(X_t)h(t)\}\d t+\si(X_t)\d \tt{W}(t),\ \ \ t\in[0,T],\\
X_0=\xi.
\end{cases}
\end{equation}
Noting that the law of  $X_{[0,T]}$ under $\P$ is $\Pi^T_\xi$  and
$\d\Q=F(X_{[0,T]})\d\P$, for any bounded measurable function $G$ on
$C([0,T];\C)$, we have
\begin{equation*}
\E_\Q(G(X_{[0,T]}))=\E(FG)(X_{[0,T]})=\Pi^T_\xi(FG).
\end{equation*}
Hence the law of $X_{[0,T]}$ under $\Q$ is $F\Pi^T_\xi$. Next,
consider  the following equation
\begin{equation}\label{eq35}
\begin{cases}
\d\{Y(t)-G(Y_t)\}=b(Y_t)\d t+\si(Y_t)\d \tt{W}(t),\ \ \ t\in[0,T],\\
Y_0=\xi.
\end{cases}
\end{equation}
Since $\tt W(t)$ is the Brownian motion  under $\Q$, we conclude
that the law of  $ Y_{[0,T]} $ under $\Q$ is $\Pi^T_\xi$. This,
together with $X_0=Y_0$ and the law of $ X_{[0,T]} $ under $\Q$ is
$F\Pi^T_\xi$, leads to
\begin{equation}\label{eq6}
W_{2,\rr_\8^T}(F\Pi^T_\xi,\Pi^T_\xi)^2\le\E_\Q\rr_\8^T(X_{[0,T]},Y_{[0,T]})^2
=\E_\Q\Big(\sup_{0\le t\le T}|X(t)-Y(t)|^2\Big).
\end{equation}

\

Now, combining (\ref{eq6}) with (\ref{eq11}), we need only to prove
the inequality
\begin{equation}\label{eq63}
\E_\Q\Big(\sup_{0\le t\le
T}|X(t)-Y(t)|^2\Big)\le\ff{\aa(T)}2\int_0^T\E_\Q|h(t)|^2\d t.
\end{equation}
 Let
$ M(t)=(X(t)-Y(t))+(G(Y_t)-G(X_t)).$ By {\bf (A1)} and the inequality
\begin{equation}\label{eq21}
(a+b)^2\le(1+\ee) (a^2+b^2/\ee),\ \ee>0,
\end{equation}
 we obtain that
\begin{equation}\label{eq57}
\begin{split}
|M(s)|^2&\le(1+\kk)(|X(s)-Y(s)|^2+|G(Y_s)-G(X_s)|^2/\kk)\\
&\le(1+\kk)^2\|X_s-Y_s\|^2_\8,
\end{split}
\end{equation}
and
\begin{equation}\label{eq68}
\begin{split}
|X(s)-Y(s)|^2&=|M(s)+(G(X_s)-G(Y_s))|^2\\
&\le\kk\|X_s-Y_s\|_\8^2+\ff{1}{1-\kk}|M(s)|^2.
\end{split}
\end{equation}
It thus follows from $X_0=Y_0$ that
\begin{equation}\label{eq58}
(1-\kk)^2\sup_{0\le s\le t}|X(s)-Y(s)|^2\le\sup_{0\le s\le
t}|M(s)|^2\le(1+\kk)^2\sup_{0\le s\le t}|X(s)-Y(s)|^2.
\end{equation}
 By {\bf (A2)}, {\bf (A3)} and
It\^o's formula, one has
\begin{equation}\label{eq12}
\begin{split}
\d |M(t)|^2&\le2\< M(t),(\si(X_t)-\si(Y_t))\d
\tt{W}(t)\>\\
&\quad+\Big(2\ss{\ll_3}\,|M(t)|\cdot
|h(t)|-\ll_1\|X_t-Y_t\|_\8^2\Big)\d t,
\end{split}
\end{equation}
which, together with the inequality $2ab\le \dd a^2+b^2/\dd, \
\dd>0$, and \eqref{eq57}, gives that
\begin{equation}\label{eq61}
\begin{split}
\d |M(t)|^2 &\le2\< M(t),(\si(X_t)-\si(Y_t)\d
\tt{W}(t))\>\\
&\quad+\Big(\ff{\ll_3}{\dd}(1+\kk)^2|h(t)|^2+(\dd-\ll_1)\|X_t-Y_t\|_\8^2\Big)\d
t,\ \ \ \dd>0.
\end{split}
\end{equation}
Due to the Burkhold-Davis-Gundy inequality and {\bf(A2)}, this implies that
\begin{equation*}
\begin{split}
\E_\Q\Big(\sup_{0\le s\le
t}|M(s)|^2\Big)&\le4\ss{\ll_2}\E_\Q\Big(\int_0^t|M(s)|^2\|X_s-Y_s\|_\8^2\d
s\Big)^{\ff{1}{2}}\\
&\quad+(\dd-\ll_1)^+\E_\Q\int_0^t\|X_s-Y_s\|_\8^2 \d
s+\ff{\ll_3}{\dd}(1+\kk)^2\int_0^t\E_\Q|h(s)|^2\d
s\\
 &\le\Big((\dd-\ll_1)^++\ff{4\ll_2}\vv\Big)\E_\Q\int_0^t\|X_s-Y_s\|_\8^2 \d
s +\vv\E_\Q\Big(\sup_{0\le s\le
t}|M(s)|^2\Big)\\
&\quad +\ff{\ll_3(1+\kk)^2}{\dd}\int_0^t\E_\Q|h(s)|^2\d s,\ \
\dd>0,\vv\in (0,1).
\end{split}
\end{equation*} By an approximation argument using stopping times, we may assume that $ \E_\Q\Big(\sup_{0\le s\le
t}|M(s)|^2\Big)<\infty$, so that this is equivalent to
\begin{equation*}
\begin{split}
\E_\Q\Big(\sup_{0\le s\le t}|M(s)|^2\Big)
 &\le \Big(\ff{(\dd-\ll_1)^+}{1-\vv}+\ff{4\ll_2}{\vv(1-\vv)}\Big)  \E_\Q\int_0^t\|X_s-Y_s\|_\8^2 \d
s\\
&\quad+\ff{\ll_3(1+\kk)^2}{\dd (1-\vv)}\int_0^t\E_\Q|h(s)|^2\d s,\ \
\dd>0,\vv\in (0,1).
\end{split}
\end{equation*}
Thus, \eqref{eq58} yields that
\begin{equation}\label{eq70}
\begin{split}
\E_\Q\Big(\sup_{0\le s\le t}|X(s)-Y(s)|^2\Big)
 &\le\ff{\vv(\dd-\ll_1)^++4\ll_2}{\vv(1-\vv)(1-\kk)^2}\E_\Q\int_0^t\|X_s-Y_s\|_\8^2 \d
s\\
&\quad+\ff{\ll_3(1+\kk)^2}{\dd(1-\kk)^2(1-\vv)}\int_0^t\E_\Q|h(s)|^2\d
s,\ \ \dd>0,\vv\in (0,1).
\end{split}
\end{equation}
Then, by the Gronwall inequality,
$$
\E_\Q\Big(\sup_{0\le t\le T}|X(t)-Y(t)|^2\Big)
  \le\ff{ \ll_3(1+\kk)^2\exp\big[\ff{\vv(\dd-\ll_1)^++4\ll_2}{\vv(1-\vv)(1-\kk)^2}T\big]  }{\dd (1-\kk)^2(1-\vv)}
 \int_0^T\E_\Q|h(t)|^2\d t$$ holds for all $\dd>0$ and $\vv\in (0,1).$  Taking $\vv =\ff 1 2$ and $\dd= \ll_1^++\ff{(1-\kk)^2}{2T},$ we obtain
 \begin{equation}\label{eq64}
 \E_\Q\Big(\sup_{0\le t\le T}|X(t)-Y(t)|^2\Big)
 \le\ff{4\ll_3(1+\kk)^2T\exp\big[1+\ff{2\ll_1^-+16\ll_2}{(1-\kk)^2}T\big]}{(1-\kk)^2\{2T\ll_1^++(1-\kk)^2\}}  \int_0^T\E_\Q|h(t)|^2\d t.
\end{equation}

On the other hand, if $\ll_1>0$,  taking $\dd=\lambda_1/2$ in
\eqref{eq61} we obtain
\begin{equation}\label{eq60}
\E_\Q\int_0^t\|X_s-Y_s\|_\8^2\d
s\le\ff{4\ll_3(1+\kk)^2}{\ll_1^2}\int_0^t\E_\Q|h(s)|^2\d s.
\end{equation}
Combining this with (\ref{eq70}) with $\dd=\ll_1$ we derive
$$\E_\Q\Big(\sup_{0\le t\le T}|X(t)-Y(t)|^2\Big)\le \ff{\ll_3(1+\kk)^2}{\ll_1(1-\kk)^2}
\Big(\ff{16\ll_2}{\vv(1-\vv)\ll_1}+\ff 1 {1-\vv}\Big)\int_0^T \E_\Q|h(s)|^2\d
s.$$ Taking the optimal choice
$$\vv=\ff{4\ss{\ll_2}}{4\ss{\ll_2}+\ss{16\ll_2+\ll_1}},$$ we conclude that
$$\E_\Q\Big(\sup_{0\le t\le T}|X(t)-Y(t)|^2\Big)\le \ff{\ll_3(1+\kk)^2(4\ss{\ll_2}+\ss{16\ll_2+\ll_1})^2}{\ll_1^2(1-\kk)^2} \int_0^T \E_\Q|h(s)|^2\d
s.$$ Combining this with (\ref{eq64}) we prove (\ref{eq63}), and hence, finish the proof.
\end{proof}

\begin{lem}\label{Lemma 2}
  Let {\bf(A1)} and {\bf (A2)} hold. Then
 \begin{equation}\label{eq14}
W_{2,\rr_\8^T}(\Pi^T_\nu,\Pi^T_\mu)^2\le\bb(T)W_{2,\rr}(\nu,\mu)^2,\
\ \mu,\nu\in\scr P(\C).
\end{equation}
 \end{lem}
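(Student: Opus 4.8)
The plan is to reduce \eqref{eq14} to a \emph{pathwise} comparison of two solutions of \eqref{eq1} driven by a common Brownian motion but started from coupled initial data, and then to run the energy estimate of Lemma \ref{Lemma 1} with the Girsanov drift $h$ deleted and replaced by the persistent source term coming from the initial gap. Only {\bf (A1)} and {\bf (A2)} are needed, so in particular no bound on $\si$ enters; this matches the absence of $\ll_3$ in \eqref{eq015}.

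First I would fix an optimal coupling $\pi$ of $\nu$ and $\mu$ for $W_{2,\rr}$, so that $\int_{\C\times\C}\|\xi-\eta\|_\8^2\,\pi(\d\xi,\d\eta)=W_{2,\rr}(\nu,\mu)^2$. For $\pi$-a.e. $(\xi,\eta)$ let $X$ and $Y$ solve \eqref{eq1} driven by the same $W$ with $X_0=\xi$ and $Y_0=\eta$; the resulting joint law of $(X_{[0,T]},Y_{[0,T]})$, averaged over $\pi$, is a coupling of $\Pi^T_\nu$ and $\Pi^T_\mu$. Since $X-Y=\xi-\eta$ on $[-\tau,0]$, the cost splits as
\begin{equation*}
\rr_\8^T(X_{[0,T]},Y_{[0,T]})^2=\sup_{s\in[-\tau,T]}|X(s)-Y(s)|^2\le\|\xi-\eta\|_\8^2+\sup_{0\le t\le T}|X(t)-Y(t)|^2,
\end{equation*}
so that \eqref{eq14} follows once I establish the pathwise bound $\E\big(\sup_{0\le t\le T}|X(t)-Y(t)|^2\big)\le(\bb(T)-1)\|\xi-\eta\|_\8^2$; the additive $\|\xi-\eta\|_\8^2$ from the history is precisely what produces the leading $1$ in \eqref{eq015}.

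For the pathwise bound I would set $M(t)=(X(t)-Y(t))-(G(X_t)-G(Y_t))$ and reuse the two-sided comparison \eqref{eq57}--\eqref{eq68} between $|M|$ and $|X-Y|$ coming from {\bf (A1)} and \eqref{eq21}. The difference from \eqref{eq58} is that the delay makes the history contribute, namely for $s\in[0,t]$ one has $\|X_s-Y_s\|_\8^2\le\|\xi-\eta\|_\8^2+\sup_{0\le u\le s}|X(u)-Y(u)|^2$. Applying It\^o's formula to $|M(t)|^2$ and invoking {\bf (A2)} exactly as in \eqref{eq12}, but now with $h\equiv0$ since $X$ and $Y$ share the same Brownian motion, I obtain
\begin{equation*}
\d|M(t)|^2\le2\<M(t),(\si(X_t)-\si(Y_t))\d W(t)\>-\ll_1\|X_t-Y_t\|_\8^2\,\d t.
\end{equation*}
The Burkhold--Davis--Gundy inequality together with the second estimate in {\bf (A2)} controls the martingale part as in Lemma \ref{Lemma 1}, and after absorbing the $\sup$-term and using \eqref{eq58} this yields, for suitable free parameters $\dd>0,\vv\in(0,1)$, a bound of the form $\E\sup_{0\le s\le t}|X(s)-Y(s)|^2\le C\E\int_0^t\big(\|\xi-\eta\|_\8^2+\sup_{0\le u\le s}|X(u)-Y(u)|^2\big)\,\d s$. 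Closing this by Gronwall's inequality and optimising $\dd,\vv$ as in Lemma \ref{Lemma 1} produces the exponential expression, the second entry of the $\min$ in \eqref{eq015}; when $\ll_1>0$ the alternative route through the stationary estimate \eqref{eq60}, with the same optimal $\vv$, gives the rational expression, the first entry of the $\min$.

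The main obstacle is bookkeeping the neutral/delay structure rather than any new idea. Because $\|\cdot\|_\8$ looks back over $[s-\tau,s]$, the initial gap $\|\xi-\eta\|_\8^2$ reappears as a source term that persists on all of $[0,T]$, and the contraction constant $\kk$ threads through the comparison between $|M|$ and $|X-Y|$. Tracking the factors $(1+\kk)^2/(1-\kk)^2$ and the additive $\|\xi-\eta\|_\8^2$ through the BDG and Gronwall steps so that the two regimes assemble into exactly the constant $\bb(T)$ of \eqref{eq015} is the delicate part; once the $h$-machinery of Lemma \ref{Lemma 1} is stripped away, the computation is otherwise entirely parallel.
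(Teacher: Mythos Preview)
Your proposal is correct and mirrors the paper's proof: couple the initial data optimally, drive $X$ and $Y$ by the same $W$, rerun the It\^o/BDG argument of Lemma~\ref{Lemma 1} on $|M(t)|^2$ with $h\equiv0$ (so the parameter $\dd$ drops out and the source term becomes $|M(0)|^2\le(1+\kk)^2\|\xi-\eta\|_\8^2$), then either apply Gronwall for the exponential branch or, when $\ll_1>0$, integrate directly to get $\E\int_0^T\|X_s-Y_s\|_\8^2\,\d s\le\ll_1^{-1}(1+\kk)^2\E\|\xi-\eta\|_\8^2$ and substitute back. The two routes, after optimising $\vv$ exactly as you indicate, assemble into $\bb(T)$.
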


\begin{proof}
Let $\{X(t)\}_{t\ge 0},\{Y(t)\}_{t\ge 0}$ be the solutions to
\eqref{eq1} with   $X_0=\xi$ and $Y_0=\eta$, where $\xi$ and $\eta$ are $\C$-valued random variables   with distributions $\mu$ and $\nu$ respectively
 and are independent of $W(\cdot)$ such that
 \begin{equation}\label{eq13}
\E(\|\xi-\eta\|^2_\8)=W_{2,\rr} (\nu,\mu)^2.
\end{equation}
Then it suffices to show that
\begin{equation}\label{eq67}
\E\Big(\sup_{t\in [0,T]} \|X_t-Y_t\|_\8^2\Big)
\le\bb(T)\E(\|\xi-\eta\|_\8^2).
\end{equation}
Let $h=0$. We have $\tt W=W$ so that    \eqref{eq61}
still holds for $W$ in place of  $\tt{W}$. Combining it with
\eqref{eq57}, we obtain that when $\ll_1>0,$
\begin{equation}\label{eq66}
\E\int_0^t  \|X_s-Y_s\|_\8^2\d s\le\ff{1}{ \ll_1
}\E|M(0)|^2\le\ff{(1+\kk)^2}{ \ll_1}\E\|\xi-\eta\|^2_\8.
\end{equation} Similarly, since in the present case  $h=0$ and according to (\ref{eq57}), $|M(0)|^2\le (1+\kk)^2\|\xi-\eta\|^2_\8,$ we may
take $\dd=0$ in the argument leading to  (\ref{eq70})  to derive   that
\begin{equation}\label{OPQ}
\begin{split}
&\E \Big(\sup_{0 \le s\le t} |X(s)-Y(s)|^2\Big)\\
 &\le \ff{\vv\ll_1^-+4\ll_2 }{\vv(1-\vv)(1-\kk)^2}\E \int_0^t  \|X_s-Y_s\|_\8^2 \d s   +\ff{(1+\kk)^2}{(1-\vv)(1-\kk)^2}\E\|\xi -\eta \|^2
\end{split}
\end{equation} for $\vv\in (0,1).$
When $\ll_1>0$, combining this with (\ref{eq66}) we arrive at
\beg{equation*}\beg{split}&\E\Big(\sup_{t\in [0,T]} \|X_t-Y_t\|_\8^2\Big) \le \E \Big(\sup_{s\in [0,T]} |X(s)-Y(s)|^2\Big) +\E\|\xi-\eta\|_\8^2\\
&\le
  \bigg\{1+\ff {(1+\kk)^2} {(1-\kk)^2}\Big(\ff 1 {1-\vv} +\ff{4\ll_2}{\vv(1-\vv)\ll_1}\Big)\bigg\}\E\|\xi-\eta\|_\8^2.\end{split}
\end{equation*} Taking $$\vv= \ff{2\ss{\ll_2}}{2\ss{\ll_2}+\ss{4\ll_2+\ll_1}}$$ we deduce that
\beq\label{OOP} \beg{split} &\E\Big(\sup_{t\in [0,T]} \|X_t-Y_t\|_\8^2\Big)\\
&\le
\bigg(1+\ff{(1+\kk)^2(2\ss{\ll_2}+\ss{4\ll_2+\ll_1})^2}{\ll_1(1-\kk)^2}\bigg)\E\|\xi-\eta\|_\8^2,\
\ \ll_1>0.\end{split} \end{equation} In general,  by the Gronwall inequality,
(\ref{OPQ}) yields that
\beg{equation*}\beg{split}&\E\Big(\sup_{t\in [0,T]} \|X_t-Y_t\|_\8^2\Big) \le \E \Big(\sup_{s\in [0,T]} |X(s)-Y(s)|^2\Big) +\E\|\xi-\eta\|_\8^2\\
&\le \bigg(1+
\ff{(1+\kk)^2}{(1-\vv)(1-\kk)^2}\exp\Big[\ff{\vv\ll_1^-+4\ll_2}{\vv(1-\vv)(1-\kk)^2}T\Big]\bigg)
\E\|\xi-\eta\|_\8^2.\end{split}\end{equation*} Taking $\vv=\ff 1 2$
we obtain
$$\E\Big(\sup_{t\in [0,T]} \|X_t-Y_t\|_\8^2\Big) \le \bigg(1+ \ff{2(1+\kk)^2}{(1-\kk)^2}\exp\Big[\ff{2\ll_1^-+16\ll_2}{(1-\kk)^2}T\Big]\bigg) \E\|\xi-\eta\|_\8^2.$$
Combining this with (\ref{OOP}) we prove (\ref{eq67}), and hence,
finish the proof.\end{proof}

\paragraph{Remark 2.1.} Obviously, when $\ll_1>0$   both $\aa(T)$ and $\bb(T)$ are bounded in $T$, so that Theorem \ref{Theorem 1} works also for $T=\8$, i.e. on the global free path space $C([0,\8);\C).$ Precisely, let $\Pi_\mu$ and $\Pi_\xi$ denote the distribution of $X_{[0,\8)}$ with initial distributions $\mu$ and $\dd_\xi$ respectively, let $\mu_F(\d\xi)= \Pi_\xi(F) \mu(\d\xi),$ and let $$\rr_\8(\bar\xi,\bar\eta)=\sup_{t\ge 0}\rr_\8(\bar\xi_t,\bar\eta_t),\ \ \bar\xi,\bar\eta\in C([0,\8);\C).$$ If $\ll_1>0$, then Theorem \ref{Theorem 1} implies
\beg{equation*}\beg{split} W_{2,\rr_\8}(F\Pi_\mu,\Pi_\mu)\le &\ff{
\ss{2\ll_3}(1+\kk)(4\ss{\ll_2}+\ss{16\ll_2+\ll_1})}{(1-\kk)\ll_1}
\ss{\Pi_\mu(F\log F)} \\
&+\bigg(1+\ff{(1+k)(2\ss{\ll_2}+\ss{4\ll_2+\ll_1})}{(1-\kk)\ss{\ll_1}}\bigg)W_{2,\rr}(\mu,\mu_F).\end{split}\end{equation*}

In general, for any $\ll_1\in\R$, we can find $\ll>0$ and constants
$C_1(\ll), C_2(\ll)>0$ such that \beq\label{GG0}
W_{2,\rr_{\8,\ll}}(F\Pi_\mu,\Pi_\mu)\le  C_1(\ll) \ss{\Pi_\mu(F\log
F)}  +C_2(\ll)W_{2,\rr}(\mu,\mu_F),\end{equation} where
$$\rr_{\8,\ll}(\bar\xi,\bar\eta):=\sup_{t\ge 0}\big\{\e^{-\ll t} \rr_\8(\bar\xi_t,\bar\eta_t)\big\},\ \ \bar\xi,\bar\eta\in C([0,\8);\C).$$
Indeed, for any $\ll>\ff{ \ll_1^-+8\ll_2}{(1-k)^2}$, \beq\label{GG1}
\sum_{n=1}^\8 \e^{-2\ll
n}\big\{\aa(n)+\bb(n)\big\}<\8.\end{equation} Noting that
$$\rr_{\8,\ll}(\bar\xi,\bar\eta)^2\le \sum_{n=1}^\8 \e^{-2\ll (n-1)} \rr_\8^n(\bar\xi_{[0,n]},\bar\eta_{[0,n]})^2,$$ we have
$$W_{2,\rr_{\8,\ll}}^2\le \sum_{n=1}^\8\e^{-2\ll(n-1)} W_{2,\rr_\8^n}^2.$$ Combining this with Theorem \ref{Theorem 1} and (\ref{GG1}), we may find finite constants $C_1(\ll),C_2(\ll)>0$ such that (\ref{GG0}) holds.

\section{The weighted $L^2$ distance on $C([0,\8);\C)$}

Since for a fixed $T>0$ the $L^2$-distance on $C([0,T];\C)$ is
dominated by the uniform norm,   the corresponding
transportation cost inequality is weaker than that derived in
Section 2. So,  in this section we only consider the global path space
$C([0,\8);\C)$. Let \beg{equation}\label{eq07} \rr_2(\xi,\eta)^2=
\ff 1 {\tau}\int_{-\tau}^0|\xi(\theta)-\eta(\theta)|^2\d\theta,\ \
\xi,\eta\in\C,\end{equation} and for $\ll\ge 0$ let
\beg{equation}\label{eq08} \rr_{2,\ll}(\bar\xi,\bar\eta)^2=
\int_0^\8\e^{-\ll t} \rr_2(\bar\xi_t,\bar \eta_t)^2\d t,\ \
\bar\xi,\bar\eta\in C([0,\8);\C).\end{equation} As mentioned in
Remark 2.1, let $\Pi_\mu$ and $\Pi_\xi$ denote the distribution of
$X_{[0,\8)}$ with initial distributions $\mu$ and $\dd_\xi$
respectively. Let $\mu_F(\d\xi)= \Pi_\xi(F) \mu(\d \xi).$

To derive the transportation cost inequality w.r.t. $\rr_{2,\ll}$, we need the following assumptions to replace {\bf (A1)} and {\bf (A2)} in the last section.

\beg{enumerate}\item[{\bf (B1)}] There exists a constant $k\in
[0,1)$ such that $|G(\xi)-G(\eta)|\le k \rr_2(\xi,\eta), \
\xi,\eta\in\C.$
\item[{\bf (B2)}] There exist constants $k_1\in\R,k_2\ge 0$  and a probability measure $\LL$ on $[-\tau,0]$ such that
\begin{equation*}
\begin{split}
&2\big\<(\xi(0)-\eta(0))-G(\xi)+G(\eta),b(\xi)-b(\eta)\big\>+\|\si(\xi)-\si(\eta)\|_{HS}^2\\
&\le-k_1 |\xi(0)-\eta(0)|^2+k_2 \int_{-\tau}^0
|\xi(\theta)-\eta(\theta)|^2\LL(\d\theta).
\end{split}
\end{equation*}
\end{enumerate}

\

A simple  example  such that
{\bf (B1)} and  {\bf (B2)} hold is that
\beg{equation*}\beg{split} &G(\xi)= \ff k {\tau}\int_{-\tau}^0 \xi(\theta)\d\theta,\\
&b(\xi)= c_1\xi(0) + \int_{-\tau}^0\xi(\theta)\LL_1(\d\theta),\\
&\si(\xi)= c_3\xi(0)+ \int_{-\tau}\xi(\theta) \LL_2(\d\theta)\end{split}\end{equation*} for some constants $k\in (0,1), c_1 \in\R$ and some finite measures $\LL_1,\LL_2$ on $[-\tau,0]$.

 \beg{thm}\label{T3.1}  Assume {\bf (B1)}, {\bf(B2)} and {\bf (A3)}.  Let $\tt\rr_2(\xi,\eta)^2=|\xi(0)-\eta(0)|^2+\rr_2(\xi,\eta)^2, \xi,\eta\in\C.$  Let $\mu\in\scr P(\C)$ and $F$ be  non-negative measurable function $F$ on $C([0,\8);\C)$ such that $\Pi_\mu(F)=1.$
\beg{enumerate} \item[$(1)$]  If $k_1>k_2$ then
\beg{equation*}\beg{split} W_{2,\rr_{2,0}}(\Pi_\mu,F\Pi_\mu)\le &\ff{\ss{2\ll_3}\{1+(1+k)^2\}}{k_1-k_2}\ss{\Pi_\mu(F\log F)}\\
 &+\ss{\tau +\ff{k_2\tau+1+k}{k_1-k_2}}\,W_{2,\tt\rr_2}(\mu,\mu_F).\end{split}\end{equation*}
\item[$(2)$] If $k_1\le k_2$ then for any $\ll> \ff{k_2-k_1}{(1-k)^2},$
\beg{equation*}\beg{split}W_{2,\rr_{2,\ll}}(\Pi_\mu,F\Pi_\mu)\le &\ff{\ss{2\ll_3}\{1+(1+k)^2\}}{k_1-k_2+\ll (1-k)^2}\ss{\Pi_\mu(F\log F)}\\
 &+\ss{\tau +\ff{\ll k(1-k)\tau +k_2\tau+1+k}{\ll(1-k)^2+k_1-k_2}}\,W_{2,\tt\rr_2}(\mu,\mu_F).\end{split}\end{equation*}\end{enumerate} \end{thm}
As explained in the proof of Theorem \ref{Theorem 1} that the result follows immediately from Lemmas \ref{L3.2} and \ref{L3.3} below. To prove these lemmas, we first collect some simple facts.

\beg{lem}\label{L3.0} Assume {\bf (B1)}. Let $t>0, \ll\ge 0, \bar\xi,\bar\eta\in C([0,t];\C),$ and $\LL$ be a probability measure on $[-\tau,0]$. Let
$$\bar M(s)= \bar\xi(s)-\bar\eta(s)-G(\bar\xi_s)+G(\bar\eta_s).$$ Then \beg{enumerate}\item[$(1)$] $\int_0^t \e^{-\ll s} \d s\int_{-\tau}^0|\bar \xi(s+\theta)-\bar\eta(s+\theta)|^2   \LL(\d\theta) \le \tau\rr_2(\bar\xi_0,\bar\eta_0)^2 + \int_0^t \e^{-\ll s} | \bar\xi (s)- \bar\eta(s)|^2\d s.$
\item[$(2)$] $\int_0^t\e^{-\ll s} |\bar M(s)|^2\d s\le (1+k)^2 \int_0^t \e^{-\ll s}|\bar \xi(s)-\bar\eta(s)|^2\d s +(1+k)k\tau\rr_2(\bar\xi_0,\bar\eta_0)^2.$
\item[$(3)$] $\int_0^t\e^{-\ll s}|\bar \xi(s)-\bar\eta(s)|^2\d s\le \ff 1 {(1-k)^2} \int_0^t \e^{-\ll s} |\bar M(s)|^2\d s+\ff{k\tau}{1-k}\rr_2(\bar\xi_0,\bar\eta_0)^2.$
 \end{enumerate} \end{lem}

\beg{proof} (1) By the Fubini theorem, We have
\beg{equation*}\beg{split} &\int_0^t \e^{-\ll s} \d s\int_{-\tau}^0|\bar \xi(s+\theta)-\bar\eta(s+\theta)|^2   \LL(\d\theta)\\
&=\int_{-\tau}^0\LL(\d\theta) \int_\theta^{t+\theta} \e^{-\ll (s-\theta)}|\bar\xi(s)-\bar\eta(s)|^2\d s\\
&\le \int_0^t \e^{-\ll s}|\bar\xi(s)-\bar\eta(s)|^2\d s +
\tau\rr_2(\bar\xi_0,\bar\eta_0)^2.\end{split}\end{equation*}

(2)   By {\bf (B1)} and applying (\ref{eq21}) to $\vv=k$, we obtain
\beq\label{WFF}|\bar M(s)|^2 \le (1+k) \big\{|\bar\xi(s)-\bar\eta(s)|^2 +k \rr_2(\bar\xi_s,\bar\eta_s)^2\big\}.\end{equation} Then
$$\int_0^t|\bar M(s)|^2\e^{-\ll s}\d s \le (1+k)\int_0^t\big\{|\bar\xi(s)-\bar\eta(s)|^2 +k \rr_2(\bar\xi_s,\bar\eta_s)^2\big\}\e^{-\ll s}\d s.$$
On the other hand, taking $\LL(\d \theta)= \ff 1 \tau \d\theta$ on $[-\tau,0]$, we have
\beq\label{X}\int_0^t\rr_2(\bar\xi_s,\bar\eta_s)^2\e^{-\ll s}\d s\le \int_0^t |\bar\xi(s)-\bar\eta(s)|^2\e^{-\ll s}\d s +\tau\rr_2(\bar\xi_0,\bar\eta_0)^2.\end{equation} Therefore, the second assertion follows.

(3) By {\bf (B1)} and (\ref{eq21}) with $\vv= \ff k{1-k},$ we have
$$|\bar\xi(s)-\bar\eta(s)|^2 \le k \rr_2(\bar\xi_s,\bar\eta_s)^2 +\ff 1 {1-k} |\bar M(s)|^2.$$ Combining this with (\ref{X}) we arrive at
\beg{equation*}\beg{split} & \int_0^t |\bar\xi(s)-\bar\eta(s)|^2\e^{-\ll s}\d s \\
 &\le k \int_0^t |\bar\xi(s)-\bar\eta(s)|^2\e^{-\ll s}\d s  + k\tau \rr_2(\bar\xi_0,\bar\eta_0)^2 +\ff 1 {1-k} \int_0^t |\bar M(s)|^2\e^{-\ll s}\d s.\end{split}\end{equation*} This implies the third assertion.
\end{proof}

\beg{lem}\label{L3.2}  Assume {\bf (B1)}, {\bf(B2)} and {\bf (A3)}. \beg{enumerate} \item[$(1)$]  If $k_1>k_2$ then
$$W_{2,\rr_{2,0}}(F\Pi_\mu,\Pi_{\mu_F})^2\le \ff{2\ll_3\{1+(1+k)^2\}^2}{(k_1-k_2)^2}\Pi_\mu(F\log
F),\ \ F\ge 0, \Pi_\mu(F)=1.$$
\item[$(2)$] If $k_1\le k_2$ then for any $\ll> \ff{k_2-k_1}{(1-k)^2},$
$$W_{2,\rr_{2,\ll}}(F\Pi_\mu,\Pi^T_{\mu_F})^2\le \ff{2\ll_3\{1+(1+k)^2\}^2}{\{k_1-k_2+\ll (1-k)^2\}^2}\Pi_\mu(F\log
F),\ \ F\ge 0, \Pi_\mu(F)=1.$$\end{enumerate} \end{lem}

\beg{proof} By an approximation argument, it suffices to prove the result for $\Pi_\mu^T$ and  $\rr_{2,\ll}^T$ in place of $\Pi_\mu$ and $\rr_{2,\ll}$ respectively with arbitrary $T>0,$  where
$$\rr_{2,\ll}^T(\bar\xi,\bar\eta)^2:= \int_0^T \e^{-\ll t} \rr_2(\bar\xi_t,\bar\eta_t)^2\d t,\ \ \bar\xi,\bar\eta\in C([0,T];\C). $$   As indicated in the proof of Lemma \ref{Lemma 1}
 that we may and do assume $\mu=\dd_\xi$. Let $h, \tt W(t), \Q,$ $X(t),Y(t)$  and $M(t)$ be constructed in the proof of Lemma \ref{Lemma 1}. It suffices to prove that
 \beq\label{WWF0} \E_\Q\int_0^T \e^{-\ll t} \rr_2(X_t,Y_t)^2\d t\le C(\ll) \E_Q\int_0^T|h(t)|^2\d t\end{equation} for
 $$C(\ll)= \beg{cases} \dfrac{\ll_3\{1+(1+k)^2\}^2}{(k_1-k_2)^2}, &\text{if}\ k_1>k_2, \ll=0,\\
 \dfrac{\ll_3\{1+(1+k)^2\}^2}{\{k_1-k_2+\ll (1-k)^2\}^2}, &\text{if}\ k_1\le k_2, \ll> \ff{k_2-k_1}{(1-k)^2}.\end{cases}$$
 By {\bf (B2}), {\bf (A3}) and It\^o's formula, we obtain
 \beg{equation*}\beg{split}  &\d|M(t)|^2- 2\<M(t), \{\si(X_t)-\si(Y_t)\}\d\tt W(t)\> \\
 &\le  \bigg\{k_2\int_{-\tau}^0 |X(t+\theta)-Y(t+\theta)|^2\LL(\d\theta)
    +2\ss{\ll_3}|M(t)|\cdot|h(t)|-k_1|X(t)-Y(t)|^2\bigg\}\d t\\
 &\le
  \bigg\{k_2\int_{-\tau}^0 |X(t+\theta)-Y(t+\theta)|^2\LL(\d\theta)
 +\ff{\ll_3}\dd |h(t)|^2 +\dd |M(t)|^2 -k_1|X(t)-Y(t)|^2\bigg\}\d t\end{split}\end{equation*} for $\dd>0.$ Thus, for any $\ll\ge 0$,
\beg{equation}\label{WF0}\beg{split} &\d\{\e^{-\ll t}|M(t)|^2\}- 2\e^{-\ll t}\<M(t), \{\si(X_t)-\si(Y_t)\}\d\tt W(t)\>\\
  &\le  \e^{-\ll t}\bigg\{k_2\int_{-\tau}^0 |X(t+\theta)-Y(t+\theta)|^2\LL(\d\theta)\\
  &\qquad +\ff{\ll_3}\dd |h(t)|^2 +(\dd -\ll)|M(t)|^2 -k_1|X(t)-Y(t)|^2\bigg\}\d t,\ \ \dd>0.\end{split}\end{equation}

(a)    Let $k_1>k_2$ and $\ll=0.$ Combining (\ref{WF0}) with Lemma
\ref{L3.0} and noting that $X_0=Y_0$, we obtain
\beg{equation*}\beg{split} 0 &\le \E_\Q\int_0^T  \e^{-\lambda t}
\bigg\{ k_2\int_{-\tau}^0 |X(t+\theta)-Y(t+\theta)|^2\LL(\d\theta)\\
&\qquad\qquad
  +\ff{\ll_3 |h(t)|^2}\dd + \dd  |M(t)|^2 -k_1|X(t)-Y(t)|^2\bigg\}\d t\\
  &\le \big\{k_2-k_1 +\dd(1+k)^2\}\int_0^T \e^{-\lambda t}\E_\Q|X(t)-Y(t)|^2\d t+\ff{\ll_3}\dd\int_0^T\E_\Q|h(t)|^2\d t.\end{split}\end{equation*}
Taking
 $$\dd= \ff{k_1-k_2}{1+(1+k)^2},$$ we arrive at
 $$\int_0^T \e^{-\lambda t}\E_\Q|X(t)-Y(t)|^2\d t\le \ff{\ll_3\{1+(1+k)^2\}^2}{(k_1-k_2)^2}\int_0^T\E_\Q|h(t)|^2\d t.$$ Since by Lemma \ref{L3.0} and $X_0=Y_0$ we have
 $\int_0^T\rr_2(X_s,Y_s)^2\d s \le \int_0^T |X(t)-Y(t)|^2\d t$, this implies (\ref{WWF0}) for $\ll=0$ and the desired constant $C(0)$.

 (b) Let $k_1\le k_2$ and $\ll> \ff{k_2-k_1}{(1-k)^2}.$ Similarly to (a), by taking
 $$\dd= \ff{k_1-k_2+\ll (1-k)^2}{1+(1-k)^2} $$ in (\ref{WF0}), we obtain
$$\E_\Q\int_0^T \e^{-\ll t} \rr_2(X_t,Y_t)^2\d t\le \E_\Q\int_0^T \e^{-\ll t} |X(t)-Y(t)|^2\d t
 \le C(\ll)\int_0^T\E_\Q|h(t)|^2\d t.$$
 Therefore, (\ref{WWF0}) holds.  \end{proof}

 \beg{lem}\label{L3.3} Assume {\bf (B1)} and {\bf (B2)}. Let
 $$\tt\rr_2(\xi,\eta)^2=  |\xi(0)-\eta(0)|^2 +\rr_2(\xi,\eta)^2.$$ Then for any $\ll\in [0,\8)\cap (\ff{k_2-k_1}{(1-k)^2},\8)$,
 $$W_{2,\rr_{2,\ll}}(\Pi_\mu,\Pi_\nu)^2\le \bigg(\tau+\ff{\ll k(1-k)\tau +k_2\tau +1+k}{\ll(1-k)^2+k_1-k_2}\bigg)W_{2,\tt\rr_2}(\mu,\nu)^2,\ \ \mu,\nu\in\scr P(\C).$$
 \end{lem}

 \beg{proof} Let $\xi,\eta$ be $\C$-valued random variables with distributions $\mu$ and $\nu$ respectively, which are independent of $W([0,\8))$ such that
 \beq\label{JG}\E\tt\rr_2(\xi,\eta)^2= W_{2,\tt\rr_2}(\mu,\nu)^2.\end{equation} By {\bf (B2)} and It\^o's formula,
 \beg{equation*}\beg{split} &\d\{\e^{-\ll t} |M(t)|^2\} - 2\e^{-\ll t} \<M(t), \{\si(X_t)-\si(Y_t)\}\d  W(t)\>\\
 &\le \e^{-\ll t} \bigg\{k_2\int_{-\tau}^0 |X(t+\theta)-Y(t+\theta)|^2\LL(\d\theta) -k_1|X(t)-Y(t)|^2 -\ll|M(t)|^2\bigg\}\d t.\end{split}\end{equation*} Then, it follows from Lemma \ref{L3.0} that
\beg{equation*}\beg{split}  \E \bigg\{|M(0)|^2 &+ \int_0^T \e^{-\ll t} \{k_2-k_1-\ll(1-k)^2\}|X(t)-Y(t)|^2\d t\\
& +\{\ll k(1-k)\tau +k_2\tau\} \rr_2(\xi,\eta)^2\bigg\} \ge 0.\end{split}\end{equation*}   Since due to (\ref{WFF})
 $$|M(0)|^2\le (1+k)|\xi(0)-\eta(0)|^2 +k(1+k)\rr_2(\xi,\eta)^2\le (1+k)\tt\rr_2(\xi,\eta)^2,$$ this implies that
 $$\E\int_0^T \e^{-\ll t} |X(t)-Y(t)|^2 \d t \le \ff {\ll k(1-k)\tau+k_2\tau+1+k}{\ll(1-k)^2+k_1-k_2}\E\tt\rr_2(\xi,\eta)^2,\ \ T>0.$$
Combining this with Lemma \ref{L3.0}(1) for $\LL(\d\theta)= \ff 1 \tau \d\theta$ on $[-\tau,0]$, we conclude that
$$W_{2,\rr_{2,\ll}} (\Pi_\mu,\Pi_\nu)^2 \le \E\int_0^\8 \e^{-\ll t} \rr_2(X_t,Y_t)^2\d t  \le
 \bigg(\tau + \ff {\ll k(1-k)\tau+k_2\tau+1+k}{\ll(1-k)^2+k_1-k_2}\bigg)\E\tt\rr_2(\xi,\eta)^2.$$ Therefore, the proof is finished according to (\ref{JG}).\end{proof}

\section{An Extension of Theorem \ref{T3.1} to neutral functional SPDEs}

In this section we shall discuss the transportation cost
inequalities for the laws of segment processes of a class of neutral
functional SPDEs in infinite-dimensional setting.  Let
$(H,\<\cdot,\cdot\>,|\cdot|)$ be a real separable Hilbert
space,   let   $\C=C([-\tau, 0]; H)$ be equipped with the uniform norm $\rr(\xi,\eta):= \|\xi-\eta\|_\infty$, and let $\rr^T_\8, \rr_2$ and $\rr_{2,\ll}$ be defined by
\eqref{eq011}, \eqref{eq07} and \eqref{eq08} respectively.  Let $\L(H)$ (resp. $\L_{HS}(H)$) be the
set of all bounded (resp. Hilbert-Schmidt) operators on $H$ equipped
with the operator norm $\|\cdot\|$ (resp. Hilbert-Schmidt norm
$\|\cdot\|_{HS}$).

Let $(A,\D(A))$ be a self-adjoint operator on $H$ with spectrum
$\si(A)\subset (-\8, -\ll_0]$ for some constant $\ll_0>0$, and let
$G,b:\C\to H$  and $\si:\C\to\scr L(H)$ be Lipschitz continuous.
Consider the neutral functional SPDE
\begin{equation}\label{eq22}
\begin{cases} \d\{Z(t)-G(Z_t)\}=\{AZ(t)+b(Z_t)\}\d t+\si(Z_t)\d
W(t),\ \ \
t\in[0,T],\\
Z_0=\xi\in C,
\end{cases}
\end{equation}
where $(W(t))_{t\ge0}$  is the cylindrical Wiener process on $H$ with
respect to  a complete probability space $(\OO, \scr {F},\P)$ with
 natural filtration $\{\F_{t}\}_{t\ge0}$. Throughout the section, we assume that   equation (\ref{eq22}) has a unique mild solution, which, by definition,
 is a continuous adapted $H$-valued  process
 $\{Z(t)\}_{t\ge -\tau}$  such that $Z_0=\xi$ and
\begin{equation*}
\begin{split}
Z(t)&=\e^{tA}\{\xi(0)-G(\xi)\}+G(Z_t)+\int_0^tA\e^{(t-s)A}G(Z_s)\d s\\
&+\int_0^t\e^{(t-s)A}b(Z_s)\d s+\int_0^t\e^{(t-s)A}\si(Z_s)\d W(s),\ \ t\ge 0
\end{split}
\end{equation*} holds. For concrete conditions implying the existence and uniqueness of mild solution, we   refer  to e.g.   \cite[Theorem 3.2]{BH} and \cite[Theorem 6]{BH10}.

Let $\Pi_\mu$ be the distribution of $\{Z_t\}_{t\ge 0}$ with initial
distribution $\mu$.
 To establish the transportation cost inequality, we further need the following   conditions.

\begin{enumerate}
\item[\textmd{\bf(C1)}] There exist constants $\bar{\ll}_1\in\R$ and $ \bar{\ll}_2\ge 0$ such that
\beg{equation*}
\begin{split}
&2\big\<\xi(0)-\eta(0)+G(\eta)-G(\xi),A\xi(0)-A\eta(0)+
b(\xi)-b(\eta)\big\>\\
&\quad\ \ \ \ \ \ \ \ \ \ \ \ \ \  +\|\si(\xi)-\si(\eta)\|^2_{HS}
\le\bar{\ll}_1\|\xi-\eta\|_\8,\\
&\|\si(\xi)-\si(\eta)\|_{HS}^2\le\bar{\ll}_2\|\xi-\eta\|_\8,
\end{split}
\end{equation*}
for $\xi,\eta\in\C$ with $\xi(0),\eta(0)\in \D(A)$.

\item[\textmd{\bf(C2)}] There exist constants $\bar{\kk}_1\in\R,\bar{\kk}_2\ge0$ and a probability
measure $\bar{\LL}$ on $[-\tau,0]$ such that \beg{equation*}
\begin{split}2\big\<\xi(0)-\eta(0)&+G(\eta)-G(\xi),A\xi(0)-A\eta(0)+
b(\xi)-b(\eta)\big\>+\|\si(\xi)-\si(\eta)\|^2_{HS}\\
&\le-\bar{\kk}_1|\xi(0)-\eta(0)|^2+\bar{\kk}_2\int_{-\tau}^0|\xi(\theta)-\eta(\theta)|^2\bar{\LL}(\d
\theta)
\end{split}
\end{equation*}
for $\xi,\eta\in\C$ with $\xi(0),\eta(0)\in \D(A)$.

\end{enumerate}

Obviously, {\bf (C1)} (resp. {\bf (C2)}) holds provided $b,\si$ and $AG$ (i.e. $G$ takes vale in $\D(A)$) are Lipschitz continuous w.r.t. 
$\rr$ (resp. $\rr_2$). 

\

Let $\xi\in \C$ and $T>0$ be fixed, and as before let $\Pi_\xi^T$ denote the law of $Z_{[0,T]}:=(Z_t)_{t\in[0,T]}.$
For any   $F\ge0$ such that $\Pi^T_\xi(F)=1$, let
$\Q, m(t)$ be defined in the proof of Lemma \ref{Lemma 1} with $X_{[0,T]}$ replaced by
$Z_{[0,T]}$.
 For the $H$-valued $\F_t$-Brownian motion $\tt{W}$
defined by \eqref{eq06} and on the probability space $(\OO,\F,\Q)$,
\eqref{eq22} can be rewritten as
\begin{equation}\label{eq23}
\begin{cases}
\d\{Z(t)+G(Z_t)\}=\{AZ(t)+b(Z_t)+\si(Z_t)h(t)\}\d t+\si(Z_t)\d \tt{W}(t),\\
Z_0=\xi.
\end{cases}
\end{equation}
Consider the following equation
\begin{equation}\label{eq24}
\begin{cases}
\d\{Y(t)+G(Y_t)\}=\{AY(t)+b(Y_t)\}\d t+\si(Y_t)\d \tt{W}(t),\\
Y_0=\xi.
\end{cases}
\end{equation}
Then $\tt{M}(t):=Z(t)-Y(t)+G(Y_t)-G(Z_t)$ solves the following
equation
\begin{equation}\label{eq25}
\begin{cases}
\d\tt{M}(t)=\{A(Z(t)-Y(t))+b(Z_t)-b(Y_t)+\si(Z_t)h(t)\}\d t\\
\ \ \ \ \ \ \ \ \ \ \ \ \ \ +(\si(Z_t)-\si(Y_t))\d \tt{W}(t),\\
Y_0=Z_0.
\end{cases}
\end{equation}
Then  repeating the proofs  of Theorem \ref{Theorem 1} and Theorem
\ref{T3.1} respectively, we obtain the following results.

\beg{thm}\label{T4.2} Assume {\bf (A1)},{\bf(A3)} and {\bf (C1)}.
Let $\mu\in\scr P(\C)$ and $F$ be  non-negative measurable function
$F$ on $C([0,\8);\C)$ such that $\Pi_\mu(F)=1.$ Then
 \begin{equation*}
W_{2,\rr_\8^T}(F\Pi^T_\mu,\Pi^T_\mu)\le\ss{\bb(T)}W_{2,\rr}(\mu,\mu^T_F)+\ss{\aa(T)}\ss{\Pi_\mu^T(F\log
F)},
\end{equation*}
where $\aa(T)$ and $\bb(T)$ are defined by \eqref{eq014} and
\eqref{eq015} with $\ll_1$ and $\ll_2$ replaced by $\bar{\ll}_1$ and
$\bar{\ll}_2$ respectively.
\end{thm}

\beg{thm}\label{T4.1}  Assume {\bf (B1)}, {\bf(C2)} and {\bf (A3)}.
    Let $\mu\in\scr P(\C)$ and $F$ be  non-negative
measurable function $F$ on $C([0,\8);\C)$ such that $\Pi_\mu(F)=1.$
\beg{enumerate} \item[$(1)$]  If $\bar{\kk}_1>\bar{\kk}_2$, then
\beg{equation*}\beg{split} W_{2,\rr_{2,0}}(\Pi_\mu,F\Pi_\mu)\le &\ff{\ss{2\ll_3}\{1+(1+\kk)^2\}}{\bar{\kk}_1-\bar{\kk}_2}\ss{\Pi_\mu(F\log F)}\\
 &+\ss{\tau +\ff{\bar{\kk}_2\tau+1+\kk}{\bar{\kk}_1-\bar{\kk}_2}}\,W_{2,\rr_2}(\mu,\mu_F).\end{split}\end{equation*}
\item[$(2)$] If $\bar{\kk}_1\le \bar{\kk}_2$, then for any $\ll> \ff{\bar{\kk}_2-\bar{\kk}_1}{(1-\kk)^2},$
\beg{equation*}\beg{split}W_{2,\rr_{2,\ll}}(\Pi_\mu,F\Pi_\mu)\le &\ff{\ss{2\ll_3}\{1+(1+\kk)^2\}}{\bar{\kk}_1-\bar{\kk}_2+\ll (1-\kk)^2}\ss{\Pi_\mu(F\log F)}\\
 &+\ss{\tau +\ff{\ll \kk(1-\kk)\tau +\bar{\kk}_2\tau+1+\kk}{\ll(1-\kk)^2+\bar{\kk}_1-\bar{\kk}_2}}\,W_{2,\rr_2}(\mu,\mu_F).\end{split}\end{equation*}\end{enumerate} \end{thm}


\begin{thebibliography}{17}

\bibitem{BGL} S. Bobkov, I. Gentil, M. Ledoux, Hypercontractivity of Hamilton-Jacobi
equations, {\it J. Math. Pure Appl.}, {\bf 80} (2001), 669--696.

\bibitem{BG99} S. Bobkov, F. G\"otze, Exponential integrability and
transportation cost related to logarithmic Sobolev inequalities,
{\it J. Funct. Anal.}, {\bf 163} (1999), 1--28.


\bibitem{BH10} B. Boufoussi,  S. Hajji,  Successive approximation of neutral
functional stochastic differential equations with jumps, {\it
Statist. Probab. Lett.}, {\bf80} (2010), 324--332.

\bibitem{BH} B. Boufoussi,  S. Hajji,  Successive approximation of neutral
functional stochastic differential equations in Hilbert spaces, {\it
Ann. Math. Blaise Pascal}, {\bf17} (2010), 183--197.


\bibitem{DGW04} H. Djellout, A. Guilin, L. Wu,  Transportation cost-information
inequalities for random dynamical systems and diffsions,  {\it Ann.
Probab.}, {\bf32} (2004), 2702--2732.

\bibitem{FS07} S. Fang, J. Shao, Optimal transport maps for Monge-Kantorovich
problem on loop groups,  {\it J. Funct. Anal.}, {\bf248} (2007),
225--257.


\bibitem{GL07} N. Gozlan, C. L\'{e}onard, A large deviation approach to some
transportation cost inequalities,  {\it Probab. Theory Related
Fields}, {\bf 139} (2007), 235--283.



\bibitem{GRS11} N. Gozlan, C. Roberto, P.-M. Samson,  A new characterization
of Talagrand's transport-entropy inequalities and applications, {\it
Ann. Probab.}, {\bf 39} (2011), 857--880.



\bibitem{IW89} N. Ikeda, S. Watanabe, \emph{Stochastic Differential Equations and Diffusion Processes (Second Edition),} North-Holland, New York, 1989.



\bibitem{L} M. Ledoux,  The Concentration of Measure Phenomenon, {\it Mathematical
Surveys and Monographs,} American Mathematical Society, Providence,
2001.


\bibitem{M10} Y. Ma, Transportation inequalities for stochastic
differential equations with jumps,  {\it Stochastic Process. Appl.},
{\bf120} (2010), 2--21.

\bibitem{M97}X. Mao, \emph{ Stochastic Differential Equations and Applications}, Horwood, England, 1997.





\bibitem{OV} F. Otto, C. Villani,  Generalization of an inequality by Talagrand
and links with the logarithmic Sobolev inequality,  {\it J. Funct.
Anal.}, {\bf 173} (2000), 361--400.



\bibitem{P11} S. Pal, Concentration for multidimensional diffusions and their
boundary local times, {\it Probab. Theory Relat. Fields}, in press.

\bibitem{S11} B. Saussereau, Transportation inequalities for stochastic differential equations driven by a fractional Brownian
motion,  {\it Bernoulli}, in press.



\bibitem{T96} M. Talagrand, Transportation cost for Gaussian and other product measures, {\it Geom. Funct.
Anal.}, {\bf 6} (1996), 587--600.


\bibitem{U} A.S. \"Ust\"unel, \emph{Introduction to Analysis on Wiener
space}, Lecture Notes in Math., Springer, 1995.


\bibitem{U10} A.S. \"Ust\"unel, Transport cost inequalities for
diffusions under uniform distance, arXiv:1009.5251 v3.


\bibitem{V03} C. Villani, \emph{ Topics in Optimal Transportation},  Graduates Studies in
Mathematics {\bf 58},  Providence RI: Amer. Math. Soc., 2003.

\bibitem{Vs10} Max-K. von Renesse, M. Scheutzow,   Existence and uniqueness of solutions of stochastic functional differential
equations,
 {\it Random Oper. Stoch. Equ.}, {\bf 18} (2010),  267--284.


\bibitem{W04} F.-Y. Wang, Transportation cost inequalities on path
spaces over Riemannian manifolds, {\it Illinois J. Math}, {\bf46}
(2002), 167--1206.

\bibitem{W04b} F.-Y. Wang, Probability distance inequalities on Riemannian manifolds and path
spaces, {\it J. Funct. Anal.}, {\bf 206} (2004), 167--190.

\bibitem{W11} F.-Y. Wang, Transportation-cost inequalities on path
spaces over manifolds with boundary, to appear in {\it Docum. Math.}

\bibitem{W10} L. Wu, Transportation inequalities for stochastic differential
equations of pure jumps, {\it Ann. Inst. Henri Poincar\'{e} Probab.
Stat.}, {\bf 46} (2010), 465--479.


\bibitem{WZ04} L. Wu,  Z. Zhang,  Talagrand's $T_2$-transportation inequality w.r.t. a
uniform metric for diffusions, {\it Acta Math. Appl. Sin. Engl.
Ser.}, {\bf20} (2004), 357--364.


 \bibitem{WZ06} L. Wu,  Z. Zhang, Talagrand's $T_2$-transportation inequality and log-Sobolev inequality for dissipative SPDEs and
 applications to reaction-diffusion equations, {\it Chinese Ann. Math. Ser. B}, {\bf 27} (2006), 243--262.











\end{thebibliography}
\end{document}